\title{On cyclic fixed points of spectra}
\author[M.~B{\"o}kstedt]{Marcel B{\"o}kstedt}
\address{Department of Mathematical Sciences, Aarhus University,
        {\AA}rhus, Denmark}
\email{marcel@imf.au.dk}
\author[R.~R.~Bruner]{Robert R. Bruner}
\address{Department of Mathematics, Wayne State University, Detroit, USA}
\email{rrb@math.wayne.edu}
\author[S.~Lun{\o}e--Nielsen]{\\ Sverre Lun{\o}e--Nielsen}
\address{Department of Mathematics, University of Oslo, Norway}
\email{sverreln@math.uio.no}
\author[J.~Rognes]{John Rognes}
\address{Department of Mathematics, University of Oslo, Norway}
\email{rognes@math.uio.no}
\date{December 14th 2012}
\renewcommand{\:}{\colon\thinspace} 
\newtheorem{theorem}{Theorem}[section]
\newtheorem{lemma}[theorem]{Lemma}
\newtheorem{proposition}[theorem]{Proposition}
\theoremstyle{definition}
\newtheorem{definition}[theorem]{Definition}
\newtheorem{example}[theorem]{Example}
\numberwithin{equation}{section}
\DeclareMathOperator*{\colim}{colim}
\DeclareMathOperator{\Ext}{Ext}
\DeclareMathOperator*{\hocolim}{hocolim}
\DeclareMathOperator{\hofib}{hofib}
\DeclareMathOperator*{\holim}{holim}
\DeclareMathOperator{\Hom}{Hom}
\DeclareMathOperator{\Map}{Map}
\newcommand{\C}{\mathbb{C}}
\newcommand{\F}{\mathbb{F}}
\newcommand{\T}{\mathbb{T}}
\newcommand{\Z}{\mathbb{Z}}
\newcommand{\A}{\mathscr{A}}
\newcommand{\longto}{\longrightarrow}
\newcommand{\tH}{\widehat{H}}
\begin{document}

\begin{abstract}
For a finite $p$-group $G$ and a bounded below $G$-spectrum $X$
of finite type mod~$p$, the $G$-equivariant Segal conjecture for $X$
asserts that the canonical map $X^G \to X^{hG}$, from $G$-fixed points to
$G$-homotopy fixed points, is a $p$-adic equivalence.  Let $C_{p^n}$ be
the cyclic group of order~$p^n$.  We show that if the $C_p$-equivariant
Segal conjecture holds for a $C_{p^n}$-spectrum $X$, as well as for
each of its geometric fixed point spectra $\Phi^{C_{p^e}}(X)$ for $0 <
e < n$, then the $C_{p^n}$-equivariant Segal conjecture holds for~$X$.
Similar results also hold for weaker forms of the Segal conjecture,
asking only that the canonical map induces an equivalence in sufficiently
high degrees, on homotopy groups with suitable finite coefficients.
\keywords{Segal conjecture \and cyclic $p$-group \and fixed points \and
  Tate construction \and smash power \and topological Hochschild homology}
\end{abstract}

\maketitle{}

\section{Introduction} \label{sec-1}

Let $p$ be any prime number.  Graeme Segal's Burnside ring conjecture
\cite{Ad82} for a finite $p$-group $G$ asserts that if $X = S_G$ is the
genuinely $G$-equivariant sphere spectrum, then the canonical map $X^G \to
X^{hG} = F(EG_+, X)^G$ is a $p$-adic equivalence.  For cyclic groups $G =
C_p$ of prime order the conjecture was proved by Lin \cite{LDMA80} and
Gunawardena \cite{Gu80}, \cite{AGM85}.  Thereafter Ravenel \cite{Ra81},
\cite{Ra84} gave an inductive proof of Segal's conjecture for finite
cyclic $p$-groups $G = C_{p^n}$ of order~$p^n$, starting from Lin and
Gunawardena's theorems.
Ravenel's result was superseded by Carlsson's proof \cite{Ca84} of the
Segal conjecture for all finite $p$-groups, but as we shall show here,
Ravenel's methods are also of interest in a more general context, where
$X$ is a quite general $G$-spectrum.

\medskip

As was elucidated by Miller and
Wilkerson \cite{MW83}, Ravenel's methods give two proofs of the Segal
conjecture for cyclic groups---one computational using the modified
Adams spectral sequence, and one non-computational, using explicit
geometric constructions.
The object of this paper is to generalize Ravenel's geometric proof of
the Segal conjecture to show that $X^G \to X^{hG}$ is ``close to'' a
$p$-adic equivalence for $G = C_{p^n}$, assuming that $X^C \to X^{hC}$ and
similar maps are ``close to'' such an equivalence for $C = C_p$.  Our main
technical results are Theorems~\ref{thm-1.6} and~\ref{cor-1.7}.
Their statements involve $(W, k)$-coconnected maps and geometric
fixed points, which are discussed in Definitions~\ref{def:coconn}
and~\ref{dfn-1.3}, respectively.  See Example~\ref{ex-2.2} for more on
how a $(W, k)$-coconnected map is close to a $p$-adic equivalence.

\medskip

In the special cases $X = B^{\wedge p^n}$ and $X = THH(B)$, where
$B^{\wedge p^n}$ is a specific $C_{p^n}$-equivariant model for the
$p^n$-th smash power of a symmetric spectrum $B$, and $THH(B)$ is
the topological Hochschild homology of a symmetric ring spectrum
$B$, the geometric fixed points are well understood, as explained
in Theorems~\ref{thm-1.9} and~\ref{thm-1.10}, respectively.  In the
special cases $W = S^{-1}/p^\infty$ and $W = F(V, S)$, where $V$ is
a finite $p$-torsion spectrum, the $(W, k)$-coconnected maps are well
understood in terms of $p$-completion and homotopy with $V$-coefficients,
as explained in Examples~\ref{ex-1.11} and~\ref{ex-1.12}, respectively.
In the doubly special case when $X = THH(B)$ and $W = S^{-1}/p^\infty$,
our results recover the main theorem of Tsalidis \cite{Ts98}.

\section{Statement of results}

We first formalize the notion of being close to a $p$-adic equivalence.
Throughout the paper we assume that a pair $(W, k)$ has been chosen as
in the following definition.  The hypothesis on~$W$ ensures that the
function spectrum $F(W, Y)$ is contractible whenever the
$p$-adic completion $Y\sphat_p$ is contractible.

\begin{definition} \label{def:coconn}
Let $S^{-1}\!/p^\infty$ be a Moore spectrum with homology $\Z/p^\infty$
concentrated in degree~$-1$, so that $F(S^{-1}\!/p^\infty, Y)
= Y\sphat_p$ for each spectrum $Y$.  Let $W$ be an object in the
localizing ideal \cite{HPS97}*{Def.~1.4.3(d)} of spectra generated by
$S^{-1}\!/p^\infty$, i.e., the smallest thick subcategory of spectra that
contains $S^{-1}\!/p^\infty$ and is closed under arbitrary wedge sums,
as well as under smash products with arbitrary spectra.

Let $k$ be an integer, or the symbol $-\infty$.  We say that a spectrum
$Y$ is \emph{$(W, k)$-coconnected} if $\pi_* F(W, Y) = 0$ for all $*
\ge k$.  We say that a map of spectra $f \: Y_1 \to Y_2$ is \emph{$(W,
k)$-coconnected} if $\hofib(f)$ is $(W, k)$-coconnected, or equivalently,
if $\pi_* F(W, Y_1) \to \pi_* F(W, Y_2)$ is injective for $* = k$ and
an isomorphism for all $* > k$.
\end{definition}

\begin{example} \label{ex-2.2}
The most obvious choice for $W$ is the Moore spectrum
$S^{-1}\!/p^\infty$ itself, in which
case $F(W, Y) = Y\sphat_p$, so a map $f \: Y_1 \to Y_2$ is $(W,
k)$-coconnected if and only if the $p$-completed map $f\sphat_p \:
(Y_1)\sphat_p \to (Y_2)\sphat_p$ induces an injection on $\pi_*$ for $*
= k$ and an isomorphism for $* > k$.  When $k=-\infty$, this is the
same as being a $p$-adic equivalence.

Alternatively, we may take $W = F(V, S)$, where $V$ is a finite CW
spectrum whose integral homology is $p$-torsion, in which case $F(W, Y)
\simeq V \wedge Y$ by Spanier--Whitehead duality.
In this case $f \:
Y_1 \to Y_2$ is $(W, k)$-coconnected if and only if the map $1 \wedge
f \: V \wedge Y_1 \to V \wedge Y_2$ induces an injection $V_*(Y_1) =
\pi_*(V \wedge Y_1) \to \pi_*(V \wedge Y_2) = V_*(Y_2)$ for $* = k$
and an isomorphism for $* > k$.
The Smith--Toda complexes $V(m)$ for $m\ge0$,
see \cite{Sm70} and \cite{To71},
are examples of such finite $p$-torsion spectra.
\end{example}

Next, we recall some comparison maps between fixed points, homotopy
fixed points, geometric fixed points and Tate constructions.

\begin{definition} \label{dfn-1.3}
Let $C = C_p \subset C_{p^n} = G$ and $\bar G = G/C \cong C_{p^{n-1}}$.
Let $\lambda = \C(1)$ be the basic faithful $G$-representation of
complex rank one, and $S^\lambda$ its one-point compactification.
Let $\infty\lambda$ be the direct sum of a countable number of
copies of~$\lambda$.  Its unit sphere $S(\infty\lambda) = EG$ is a
free contractible $G$-CW space, and its one-point compactification
$S^{\infty\lambda} = \widetilde{EG}$ sits in a $G$-homotopy cofiber
sequence $EG_+ \to S^0 \to \widetilde{EG}$, where the first map collapses
$EG$ to the non-basepoint.

Let $X$ be a $G$-spectrum, in the sense of \cite{LMS86}, and consider
the vertical map
$$
\xymatrix{
EG_+ \wedge X \ar[r] \ar[d]^{\simeq_G} & X \ar[r] \ar[d] &
	{}\widetilde{EG} \wedge X \ar[d] \\
EG_+ \wedge F(EG_+, X) \ar[r] & F(EG_+, X) \ar[r] &
	{}\widetilde{EG} \wedge F(EG_+, X)
}
$$
of horizontal $G$-homotopy cofiber sequences.  Passing to $G$-fixed
point spectra we obtain a vertical map
$$
\xymatrix{
X_{hG} \ar[rr]^-N \ar@{=}[d] && X^G \ar[rr]^-R \ar[d]^{\Gamma_n} &&
	\Phi^C(X)^{\bar G} \ar[d]^{\hat\Gamma_n} \\
X_{hG} \ar[rr]^-{N^h} && X^{hG} \ar[rr]^-{R^h} && X^{tG}
}
$$
of horizontal homotopy cofiber sequences, often called the
norm--restriction sequences \cite{GM95}*{Diag.~(C), (D)}.
Here
\begin{align*}
X_{hG} &= EG_+ \wedge_G X	&&\qquad\text{(homotopy orbits)} \\
X^{hG} &= F(EG_+, X)^G		&&\qquad\text{(homotopy fixed points)} \\
X^{tG} &= [\widetilde{EG} \wedge F(EG_+,X)]^G
				&&\qquad\text{(Tate construction)} \\
\intertext{and there is a preferred $\bar G$-equivariant equivalence}
[\widetilde{EG} \wedge X]^C &\overset{\simeq}\longto \Phi^C(X)
				&&\qquad\text{(geometric fixed points)}
\end{align*}
inducing the upper right hand equivalence $[\widetilde{EG}
\wedge X]^G \simeq \Phi^C(X)^{\bar G}$.  For more details, see
e.g.~\cite{HM97}*{Prop.~2.1}.
\end{definition}

The right hand square above is homotopy cartesian, so $\Gamma_n$ is $(W,
k)$-co\-connected if and only if $\hat\Gamma_n$ is $(W, k)$-coconnected.
This observation can be combined with the conclusions of all of the
theorems below.

We briefly write $H_*(X) = H_*(X; \F_p)$ for the mod~$p$
homology of any spectrum, and say that $H_*(X)$ is of finite type if
each group $H_m(X)$ is finite.

\begin{theorem} \label{thm-1.6}
Let $X$ be a $G$-spectrum with $\pi_*(X)$ bounded
below and $H_*(X)$ of finite type.  Suppose that
$\Gamma_1 \: X^C \to X^{hC}$
and
$\Gamma_{n-1} \: \Phi^C(X)^{\bar G} \to \Phi^C(X)^{h\bar G}$
are $(W, k)$-coconnected maps.  Then
$\Gamma_n \: X^G \to X^{hG}$
is $(W, k)$-coconnected.
\end{theorem}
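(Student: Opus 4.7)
The plan is to reduce the claim for $\Gamma_n$ to one for $\hat\Gamma_n$, and then to split $\hat\Gamma_n$ along $C \subset G$. Since the right hand square of Definition~\ref{dfn-1.3} is homotopy cartesian, $\Gamma_n$ is $(W,k)$-coconnected if and only if $\hat\Gamma_n \: \Phi^C(X)^{\bar G} \to X^{tG}$ is, so it suffices to analyze the latter. We aim for a factorization
$$
\hat\Gamma_n \: \Phi^C(X)^{\bar G} \xrightarrow{\Gamma_{n-1}} \Phi^C(X)^{h\bar G} \xrightarrow{\rho} X^{tG},
$$
whose first factor is $(W,k)$-coconnected by the hypothesis on $\Gamma_{n-1}$, and whose second factor $\rho$ will inherit the property from the hypothesis on $\Gamma_1$.

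The key step is to identify $X^{tG}$ with $(X^{tC})^{h\bar G}$ as $\bar G$-spectra and, under this identification, to recognize $\rho$ as $(\hat\Gamma_1)^{h\bar G}$, the map obtained by taking $\bar G$-homotopy fixed points of the $\bar G$-equivariant map $\hat\Gamma_1 \: \Phi^C(X) \to X^{tC}$. The hypothesis on $\Gamma_1$, combined with the $n = 1$ case of the homotopy cartesian square, forces $\hat\Gamma_1$ to be $(W,k)$-coconnected. Writing $Z = \hofib(\hat\Gamma_1)$, the strongly convergent $\bar G$-homotopy fixed point spectral sequence
$$
E_2^{s,t} = H^s(\bar G; \pi_t F(W, Z)) \Longrightarrow \pi_{t-s} F(W, Z^{h\bar G})
$$
has $E_2^{s,t} = 0$ for $t \ge k$, hence vanishes throughout the region $t - s \ge k$, showing that $Z^{h\bar G} \simeq \hofib((\hat\Gamma_1)^{h\bar G})$ is $(W,k)$-coconnected. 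The bounded-below and finite-type hypotheses on $X$ propagate to $Z$ and underwrite the convergence.

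The main obstacle will be establishing the $\bar G$-equivariant equivalence $X^{tG} \simeq (X^{tC})^{h\bar G}$ and verifying that $\hat\Gamma_n$ is identified with the canonical $\Gamma_n$-map for the auxiliary $G$-spectrum $Y = \widetilde{EG} \wedge X$, so that the displayed factorization becomes the standard decomposition of this $\Gamma_n$-map along $C \subset G$. Since $\Phi^C(X) = Y^C$ and $X^{tC} \simeq Y^{hC}$, one must show that the natural comparison $\widetilde{EG} \wedge F(EG_+, X) \to F(EG_+, \widetilde{EG} \wedge X)$ induces an equivalence on $G$-fixed points, yielding $X^{tG} \simeq Y^{hG} \simeq (Y^{hC})^{h\bar G}$. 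This is a Tate-convergence statement whose justification relies crucially on $X$ being bounded below with mod~$p$ homology of finite type.
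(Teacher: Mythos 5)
Your reduction of the problem is on the right track and matches the paper's strategy up to a point: pass from $\Gamma_n$ to $\hat\Gamma_n$ via the homotopy cartesian square, use the hypothesis on $\Gamma_1$ to get that $\hat\Gamma_1 \: \Phi^C(X) \to X^{tC}$ is $(W,k)$-coconnected, and apply the homotopy fixed point spectral sequence to deduce that $(\hat\Gamma_1)^{h\bar G}$ is $(W,k)$-coconnected (this is the paper's Lemma~\ref{lem-2.4}). What remains, as you correctly isolate, is to show that the comparison $\Gamma_{n-1} \: X^{tG} = (X^{tC})^{\bar G} \to (X^{tC})^{h\bar G}$ is a $p$-adic equivalence, so that the commuting square in Proposition~\ref{prop-2.3} lets you deduce the coconnectivity of $\hat\Gamma_n$.

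However, the mechanism you propose for this last step cannot work. You set $Y = \widetilde{EG} \wedge X$ and claim that the natural map $\widetilde{EG} \wedge F(EG_+, X) \to F(EG_+, \widetilde{EG} \wedge X)$ induces an equivalence on $G$-fixed points, giving $X^{tG} \simeq Y^{hG}$. But $\widetilde{EG} = S^{\infty\lambda}$ is \emph{nonequivariantly contractible} (it is $S^\infty$ as an underlying space), so $\widetilde{EG} \wedge X$ is weakly contractible as a naive $G$-spectrum, and hence $Y^{hG} = F(EG_+, \widetilde{EG}\wedge X)^G$ is contractible for every $X$. By contrast $X^{tG}$ is generally far from contractible --- e.g.\ $S^{tC_p}$ is $p$-adically equivalent to $S$ by Lin's theorem. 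The same objection defeats $X^{tC} \simeq Y^{hC}$. So $\hat\Gamma_n$ is \emph{not} the $\Gamma_n$-map of the auxiliary spectrum $Y$, and the proposed identification collapses the wrong thing.

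The statement you actually need --- that the homotopy limit $\holim_j (\Sigma^{-j\lambda^p}X)^{tG}$, which is the fibre of $\Gamma_{n-1} \: X^{tG} \to (X^{tC})^{h\bar G}$, is $p$-adically contractible --- is the technical heart of the theorem, and it is not a formal consequence of Tate convergence. In the paper this is Proposition~\ref{prop-2.10}, whose proof uses Proposition~\ref{prop-2.8}: one filters $\widetilde{EG}$ by the Greenlees filtration, compares the $z$-tower (smashing with $z \: S^0 \to S^{\lambda^p}$) to the composite of the Tate tower map and the $\xi$-tower (smashing with $\xi \: S^\lambda \to S^{\lambda^p}$), and exploits that $\xi_* = 0$ on mod~$p$ homology to conclude that $\colim_j H_c^*((\Sigma^{-j\lambda^p}X)^{tG}) = 0$. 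The inverse limit Adams spectral sequence then gives the $p$-adic contractibility. This argument is precisely where the bounded-below and finite-type hypotheses enter; your proposal names those hypotheses but does not supply the mechanism that uses them.
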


Informally, this theorem asserts that if $X^C \to X^{hC}$ and $Y^{\bar G}
\to Y^{h\bar G}$ are close to $p$-adic equivalences, for $Y = \Phi^C(X)$,
then $X^G \to X^{hG}$ is close to a $p$-adic equivalence.

\begin{theorem} \label{cor-1.7}
Let $X$ be a $C_{p^n}$-spectrum.  Suppose, for each of the
geometric fixed point spectra
$$
Y = X \,,\, \Phi^{C_p}(X) \,,\, \dots \,,\, \Phi^{C_{p^{n-1}}}(X) \,,
$$
that $\pi_*(Y)$ is bounded below, $H_*(Y)$ is of finite type and $\Gamma_1
\: Y^{C_p} \to Y^{hC_p}$ is $(W, k)$-coconnected.  Then $\Gamma_n \:
X^{C_{p^n}} \to X^{hC_{p^n}}$
is $(W, k)$-coconnected.
\end{theorem}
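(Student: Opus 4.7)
The plan is to induct on $n$, using Theorem~\ref{thm-1.6} as the inductive step. The base case $n = 1$ is immediate: the conclusion that $\Gamma_1 \colon X^{C_p} \to X^{hC_p}$ is $(W, k)$-coconnected is literally the only hypothesis in that case (the instance $Y = X$), and the list of geometric fixed point spectra in the assumption degenerates to $X$ alone.

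For the inductive step, I would assume the result for $n - 1 \geq 1$, let $X$ be a $C_{p^n}$-spectrum satisfying the stated hypotheses, and apply Theorem~\ref{thm-1.6} with $G = C_{p^n}$, $C = C_p$, and $\bar G = C_{p^{n-1}}$. Two inputs are required. The first, that $\Gamma_1 \colon X^{C_p} \to X^{hC_p}$ is $(W, k)$-coconnected, is given directly by the hypothesis for $Y = X$. The second, that $\Gamma_{n-1} \colon \Phi^{C_p}(X)^{\bar G} \to \Phi^{C_p}(X)^{h\bar G}$ is $(W, k)$-coconnected, is the substantive input and is precisely what the induction hypothesis should deliver, applied to the $C_{p^{n-1}}$-spectrum $Z := \Phi^{C_p}(X)$.

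To check that $Z$ satisfies the hypotheses of the theorem at level $n - 1$, I would invoke the standard iterated geometric fixed point identification
$$
\Phi^{C_{p^e}}(Z) \;\simeq\; \Phi^{C_{p^{e+1}}}(X)
$$
as $C_{p^{n-1-e}}$-spectra, for each $0 \leq e \leq n - 2$.  Consequently the list of geometric fixed point spectra of $Z$ is, up to a reindexing shift by one, a sublist of the list for $X$ appearing in the hypothesis. Each of the spectra $\Phi^{C_{p^f}}(X)$ for $1 \leq f \leq n - 1$ is by assumption bounded below, of finite type mod~$p$, and satisfies the $C_p$-Segal condition on $\Gamma_1$, so the same holds for every $\Phi^{C_{p^e}}(Z)$ with $0 \leq e \leq n - 2$. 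The induction hypothesis then yields the $(W, k)$-coconnectedness of $\Gamma_{n-1}$ for $Z$, which is exactly the second input.

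Assembling both inputs, Theorem~\ref{thm-1.6} produces the desired $(W, k)$-coconnectedness of $\Gamma_n \colon X^{C_{p^n}} \to X^{hC_{p^n}}$, closing the induction. The only non-routine point, apart from Theorem~\ref{thm-1.6} itself, is the iterated geometric fixed point identification and verifying that the quotient $\bar G$-action on $\Phi^{C_p}(X)$ matches the residual $C_{p^{n-1}}$-structure used when applying the theorem inductively. This is standard, but it is the bookkeeping that makes the induction run; I would expect the full proof to amount to little more than explicitly recording this matching.
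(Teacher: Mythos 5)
Your proof is correct and follows exactly the paper's own argument: induction on $n$ with Theorem~\ref{thm-1.6} as the inductive step, using the identification $\Phi^{C_p}(\Phi^{C_{p^e}}(X)) \cong \Phi^{C_{p^{e+1}}}(X)$ to transport the hypotheses to the $C_{p^{n-1}}$-spectrum $\Phi^{C_p}(X)$. Your write-up simply spells out the bookkeeping the paper leaves implicit.
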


The proofs of Theorems~\ref{thm-1.6} and~\ref{cor-1.7} are given near
the end of Section~\ref{sec-2}.  The bounded below and finite type
mod~$p$ hypotheses enter in the proof of Proposition~\ref{prop-2.10},
where we make use of the convergence of an inverse limit of Adams spectral
sequences.

The following construction was introduced by the first author, in the
context of functors with smash product (FSPs).
See \cite{BHM89}*{\S3} for a published account.
We are principally interested in the case $r = p^n$.

\begin{definition}
Let $B$ be any symmetric spectrum.  The $r$-th smash power $B^{\wedge
r}$ can be defined as a $C_r$-spectrum by the construction
$$
B^{\wedge r} = sd_r THH(B)_0
	= THH(B)_{r-1}
$$
from \cite{HM97}*{\S2.4}.  Its $V$-th space is defined by a homotopy
colimit
$$
(B^{\wedge r})_V = \hocolim_{(i_1, \dots, i_r) \in I^r}
\Map(S^{i_1} \wedge \dots \wedge S^{i_r},
B_{i_1} \wedge \dots \wedge B_{i_r} \wedge S^V) \,,
$$
and $C_r$ cyclically permutes the smash factors, in addition to its
natural action on $S^V$.
\end{definition}

To ensure that $B^{\wedge r}$ has the same naively equivariant
homotopy type as the ordinary $r$-fold smash product $B \wedge \dots
\wedge B$, it suffices to assume that $B$ is flat and convergent, see
e.g.~\cite{LR12}*{Lem.~5.5}.  Hereafter, when referring to $B^{\wedge r}$
we always assume that $B$ has first been replaced by an equivalent flat
and convergent symmetric spectrum.
In \cite{LR12}*{Thm.~5.13}, the third and fourth authors prove that
$\Gamma_1 \: (B^{\wedge p})^{C_p} \to (B^{\wedge p})^{hC_p}$ is a $p$-adic
equivalence whenever $\pi_*(B)$ is bounded below and $H_*(B)$ is of
finite type.  This provides the inductive beginning for the following
application of Theorem~\ref{cor-1.7}.

\begin{theorem} \label{thm-1.9}
Let $B$ be a symmetric spectrum with $\pi_*(B)$ bounded below and $H_*(B)$
of finite type.  Then
$$
\Gamma_n \: (B^{\wedge {p^n}})^{C_{p^n}}
	\to (B^{\wedge {p^n}})^{hC_{p^n}}
$$
is a $p$-adic equivalence, for each $n\ge1$.
\end{theorem}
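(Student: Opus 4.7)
The plan is to apply Theorem~\ref{cor-1.7} to the $C_{p^n}$-spectrum $X = B^{\wedge p^n}$ with the choice $(W, k) = (S^{-1}\!/p^\infty, -\infty)$; by Example~\ref{ex-2.2} this reduces $(W, k)$-coconnectedness to $p$-adic equivalence, so the conclusion of Theorem~\ref{cor-1.7} becomes exactly the statement of Theorem~\ref{thm-1.9}. What must be checked are the three hypotheses of Theorem~\ref{cor-1.7} for each intermediate geometric fixed point spectrum $Y = \Phi^{C_{p^e}}(X)$ with $0 \le e < n$.

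The key structural input is the cyclotomic identification
\[
\Phi^{C_{p^e}}(B^{\wedge p^n}) \simeq B^{\wedge p^{n-e}}
\]
as $C_{p^{n-e}}$-spectra. This is a standard consequence of the $sd_r THH_0$ model for $B^{\wedge r}$, and can be extracted from \cite{HM97}*{\S2.5} or from the discussion in \cite{LR12}. Given this identification, the bounded-below and finite-type mod~$p$ homology hypotheses for $Y$ follow from the corresponding hypotheses for $B$: $\pi_*(B^{\wedge p^{n-e}})$ is bounded below because $\pi_*(B)$ is (and smash factors only shift the connectivity upward), while $H_*(B^{\wedge p^{n-e}}) \cong H_*(B)^{\otimes p^{n-e}}$ is of finite type by the K\"unneth isomorphism.

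It remains to verify the crucial hypothesis that $\Gamma_1 \colon Y^{C_p} \to Y^{hC_p}$ is a $p$-adic equivalence. Writing $Y = B^{\wedge p^{n-e}}$ as $(B')^{\wedge p}$ with $B' = B^{\wedge p^{n-e-1}}$ regarded as a symmetric spectrum, the subgroup $C_p \subset C_{p^{n-e}}$ acts by cyclic permutation of the $p$ outer factors, and the desired $p$-adic equivalence is exactly the conclusion of \cite{LR12}*{Thm.~5.13} applied to $B'$. That theorem applies because, by the same K\"unneth reasoning as above, $B' = B^{\wedge p^{n-e-1}}$ inherits bounded-below homotopy groups and finite-type mod~$p$ homology from $B$. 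Once this is confirmed for every $0 \le e < n$, Theorem~\ref{cor-1.7} delivers the claim.

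The main obstacle I anticipate is technical rather than conceptual: one must ensure that $B^{\wedge p^{n-e-1}}$, viewed as a symmetric spectrum after forgetting its cyclic action, can legitimately be replaced by a flat and convergent symmetric spectrum (as required by the conventions just after the definition of $B^{\wedge r}$) to which \cite{LR12}*{Thm.~5.13} can be applied, and that the resulting $p$-fold smash power is $C_p$-equivariantly equivalent to $B^{\wedge p^{n-e}}$ under the subgroup inclusion $C_p \subset C_{p^{n-e}}$. These are point-set level coherence checks for the smash power construction; granted them, the argument runs as outlined.
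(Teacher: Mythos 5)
Your proposal is correct and follows essentially the same route as the paper: reduce to Theorem~\ref{cor-1.7} with $W = S^{-1}\!/p^\infty$, $k=-\infty$, identify $\Phi^{C_{p^e}}(B^{\wedge p^n})$ with $B^{\wedge p^{n-e}} \simeq Z^{\wedge p}$ for $Z = B^{\wedge p^{n-e-1}}$, and invoke \cite{LR12}*{Thm.~5.13} for the $\Gamma_1$ input. The paper cites \cite{HM97}*{Prop.~2.5} (simplicial degree zero) for the cyclotomic identification and treats the flatness/convergence issue as part of the standing conventions, but the substance of the argument is the same.
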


When $B$ is a symmetric ring spectrum, its topological Hochschild
homology $THH(B)$ is a $\T$-spectrum \cite{HM97}*{\S2.4}, where $\T$
is the circle group.  It is not true in general that $\Gamma_1 \:
THH(B)^{C_p} \to THH(B)^{hC_p}$ is a $p$-adic equivalence, see
e.g.~\cite{HM97}*{Prop.~5.3} and \cite{Ro99}*{Thm.~4.7}, but when it
is ``approximately'' true, then the following theorem is useful.

\begin{theorem} \label{thm-1.10}
Let $B$ be a connective symmetric ring spectrum with $H_*(B)$ of finite
type, and suppose that
$$
\Gamma_1 \: THH(B)^{C_p} \to THH(B)^{hC_p}
$$
is $(W, k)$-coconnected.  Then
$$
\Gamma_n \: THH(B)^{C_{p^n}} \to THH(B)^{hC_{p^n}}
$$
is $(W, k)$-coconnected, for each $n\ge2$.
\end{theorem}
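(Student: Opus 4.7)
The plan is to reduce Theorem~\ref{thm-1.10} to a direct application of Theorem~\ref{cor-1.7}, with $X = THH(B)$ restricted from a $\T$-spectrum to a $C_{p^n}$-spectrum. Concretely, I would fix $n \ge 2$ and verify the three hypotheses of Theorem~\ref{cor-1.7} for each geometric fixed point spectrum
\[
Y = \Phi^{C_{p^e}}(THH(B)), \qquad 0 \le e < n.
\]

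The crucial input is the \emph{cyclotomic structure} on topological Hochschild homology: there is a natural $\T$-equivariant equivalence
\[
\Phi^{C_p}(THH(B)) \;\simeq\; THH(B),
\]
where the left hand side is regarded as a $\T/C_p \cong \T$-spectrum via the root isomorphism (compare \cite{HM97}*{\S2.5}). Iterating this equivalence $e$ times identifies $\Phi^{C_{p^e}}(THH(B))$ with $THH(B)$, compatibly with the $\T/C_{p^e} \cong \T$-action. Therefore each of the three conditions of Theorem~\ref{cor-1.7} for $Y = \Phi^{C_{p^e}}(THH(B))$ reduces to the corresponding condition for $THH(B)$ itself.

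The remaining checks are then: (i) $\pi_*(THH(B))$ is bounded below, which is immediate because $THH$ of a connective ring spectrum is connective; (ii) $H_*(THH(B); \F_p)$ is of finite type, which follows from the convergent B\"okstedt spectral sequence
\[
E^2_{*,*} = HH_*(H_*(B))\;\Longrightarrow\;H_*(THH(B))
\]
together with the assumption that $H_*(B)$ is of finite type and $B$ is connective, so each $E^2$-bidegree and hence each $H_m(THH(B))$ is finite; and (iii) $\Gamma_1 \: Y^{C_p} \to Y^{hC_p}$ is $(W, k)$-coconnected, which under the cyclotomic identification is precisely the standing hypothesis on $\Gamma_1 \: THH(B)^{C_p} \to THH(B)^{hC_p}$. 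With these three items in hand, Theorem~\ref{cor-1.7} applied to $X = THH(B)$ and the subgroup $C_{p^n} \subset \T$ yields that $\Gamma_n \: THH(B)^{C_{p^n}} \to THH(B)^{hC_{p^n}}$ is $(W, k)$-coconnected.

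The main obstacle is a matter of bookkeeping rather than of genuinely new mathematics: one must ensure that the cyclotomic equivalence is compatible with the comparison map $\Gamma_1$ used in Definition~\ref{dfn-1.3}, so that the hypothesis on $\Gamma_1$ for $THH(B)$ indeed transports to the hypothesis on $\Gamma_1$ for each $\Phi^{C_{p^e}}(THH(B))$. This compatibility is built into the construction of the cyclotomic structure on $THH$, but making it precise requires invoking the naturality of the norm--restriction square in Definition~\ref{dfn-1.3} with respect to the identification $\Phi^{C_p}(THH(B)) \simeq THH(B)$. Once this identification is in place, no further computation is needed, and the conclusion is immediate from Theorem~\ref{cor-1.7}.
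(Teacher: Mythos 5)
Your proposal is correct and follows essentially the same route as the paper: both proofs reduce to Theorem~\ref{cor-1.7} by using the cyclotomic structure map $r \colon \Phi^{C_p}THH(B) \xrightarrow{\simeq} THH(B)$ and its iterates to identify each $\Phi^{C_{p^e}}(THH(B))$ with $THH(B)$, and then verify connectivity, finite type, and the $\Gamma_1$ hypothesis for $THH(B)$ itself. The only cosmetic difference is that you invoke the B\"okstedt spectral sequence for the finite-type claim, whereas the paper simply cites the simplicial definition of $THH$.
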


The proofs of Theorems~\ref{thm-1.9} and~\ref{thm-1.10} are given at
the end of Section~\ref{sec-2}.

In the case $B = S$ there is a $G$-equivariant equivalence $THH(S) \simeq
S_G$, and $\Gamma_1$ is a $p$-adic equivalence by the classical Segal
conjecture.  Also in the cases $B = MU$ (the complex cobordism spectrum)
and $B = BP$ (the Brown--Peterson spectrum) it turns out that $\Gamma_1$
for $THH(B)$ is a $p$-adic equivalence, as the third and fourth authors
show in \cite{LR11}*{Thm.~1.1}.  This provides examples with
$k=-\infty$ for the following special case.

\begin{example} \label{ex-1.11}
Taking $W = S^{-1}\!/p^\infty$, the assumption in Theorem~\ref{thm-1.10} is
that the $p$-completed map $\Gamma_1 \: (THH(B)^{C_p})\sphat_p \to
(THH(B)^{hC_p})\sphat_p$ is $k$-coconnected, i.e., that it induces
an injection on $\pi_k$ and an isomorphism on $\pi_*$ for $* >
k$, and the conclusion is that the $p$-completed map
$$
\Gamma_n \: (THH(B)^{C_{p^n}})\sphat_p \to (THH(B)^{hC_{p^n}})\sphat_p
$$
is also $k$-coconnected, for all $n\ge2$.  This recovers a theorem of
Tsalidis \cite{Ts98}*{Thm.~2.4}.
\end{example}

\begin{example} \label{ex-1.12}
Taking $W = F(V, S)$ and $V = V(1) = S/(p, v_1)$, the Smith--Toda
complex of chromatic type~$2$ (for $p$ odd), the assumption in
Theorem~\ref{thm-1.10} is that
$$
V(1)_*(\Gamma_1) \: V(1)_* THH(B)^{C_p} \to V(1)_* THH(B)^{hC_p}
$$
is $k$-coconnected, and the conclusion is that
$$
V(1)_*(\Gamma_n) \:  V(1)_* THH(B)^{C_{p^n}} \to V(1)_* THH(B)^{hC_{p^n}}
$$
is
also $k$-coconnected, for all $n\ge2$.  This recovers the generalization
of Tsalidis' theorem used by Ausoni and the fourth author
\cite{AR02}*{Thm.~5.7} in the special case when $B = \ell$, the Adams
summand of connective $p$-local complex $K$-theory, and $k = 2p-2$.
The generalized result is used again in \cite{AR12}*{Cor.~5.9},
for $B = \ell/p = k(1)$, the first connective Morava $K$-theory.
\end{example}

\section{Constructions and proofs} \label{sec-2}

\begin{definition}
Let $\bar\lambda$ be the basic faithful $\bar G$-representation of
complex rank one.  Like in Definition~\ref{dfn-1.3}, we let $E\bar G =
S(\infty\bar\lambda)$ and $\widetilde{E\bar G} = S^{\infty\bar\lambda}$.
The usual map from the homotopy colimit to the categorical colimit
is a $\bar G$-equivalence $\hocolim_j S^{j\bar\lambda}
\overset{\simeq}\longto S^{\infty\bar\lambda} = \widetilde{E\bar G}$.
The pullback of $\bar\lambda$ along the canonical projection $G \to
\bar G$ is the $p$-th tensor power $\lambda^p = \C(p)$ of $\lambda$,
and we get a $G$-equivalence
$$
\hocolim_j S^{j\lambda^p} \overset{\simeq}\longto S^{\infty\lambda^p}
	= \widetilde{E\bar G} \,,
$$
where the right hand side is implicitly viewed as a $G$-space by pullback
along $G \to \bar G$.  The $G$-map $S^{j\lambda^p} \to S^{(j+1)\lambda^p}$
in the colimit system is given by smashing $S^{j\lambda^p}$ with the
one-point compactification $z \: S^0 \to S^{\lambda^p}$ of the inclusion
$\{0\} \subset \lambda^p$.
\end{definition}

\begin{lemma} \label{lem-2.2}
Let $X$ be a $G$-spectrum.  There is a natural homotopy cofiber
sequence
$$
\xymatrix{
\holim_j \, (\Sigma^{-j\lambda^p} X)^G \ar[r]
& (X^C)^{\bar G} \ar[rr]^-{\Gamma_{n-1}}
&& (X^C)^{h\bar G} \,,
}
$$
where the right hand map is $\Gamma_{n-1}$ for the $\bar G$-spectrum
$X^C$.
\end{lemma}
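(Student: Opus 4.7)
The plan is to identify the homotopy fiber of $\Gamma_{n-1}$, applied to the $\bar G$-spectrum $X^C$, with the $G$-fixed points $F(\widetilde{E\bar G}, X)^G$, and then to recognize this as the claimed homotopy limit using the description $\widetilde{E\bar G} \simeq \hocolim_j S^{j\lambda^p}$ from the beginning of this section.

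First I would re-express the map $\Gamma_{n-1} \colon (X^C)^{\bar G} \to (X^C)^{h\bar G}$ directly in terms of $X$. Iteration of genuine categorical fixed points gives $(X^C)^{\bar G} = X^G$. For the target, the key observation is that $E\bar G_+$, viewed as a pointed $G$-space via the canonical projection $G \to \bar G$, has trivial $C$-action; hence the natural map $F(E\bar G_+, X^C) \to F(E\bar G_+, X)^C$ is an equivalence of $\bar G$-spectra. (This is verified by checking maps out of an arbitrary $\bar G$-spectrum $Z$ and using that $Z \wedge E\bar G_+$ still has trivial $C$-action.) Iterating fixed points once more,
$$ (X^C)^{h\bar G} = F(E\bar G_+, X^C)^{\bar G} \simeq F(E\bar G_+, X)^G, $$
and under these identifications $\Gamma_{n-1}$ corresponds to the $G$-fixed points of the map $X \to F(E\bar G_+, X)$ induced by the collapse $E\bar G_+ \to S^0$.

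Second, I would extend this collapse to the $G$-homotopy cofiber sequence $E\bar G_+ \to S^0 \to \widetilde{E\bar G}$ of pointed $G$-spaces (pulled back from $\bar G$-spaces), apply $F(-, X)$ to get a $G$-equivariant homotopy fiber sequence, and then take $G$-fixed points. Since $F(-, X)$ converts cofiber sequences of spaces into fiber sequences and $(-)^G$ preserves fiber sequences, this yields the homotopy fiber sequence
$$ F(\widetilde{E\bar G}, X)^G \longto X^G \longto F(E\bar G_+, X)^G, $$
which by the previous step is the sought-after fiber sequence ending in $\Gamma_{n-1}$.

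Finally, using the equivalence $\widetilde{E\bar G} \simeq \hocolim_j S^{j\lambda^p}$ from the start of Section~\ref{sec-2}, the function spectrum functor converts this homotopy colimit into a homotopy limit, and because $(-)^G$ is a right adjoint that preserves homotopy limits,
$$ F(\widetilde{E\bar G}, X)^G \simeq \holim_j F(S^{j\lambda^p}, X)^G = \holim_j (\Sigma^{-j\lambda^p} X)^G, $$
giving the desired cofiber sequence. The main technical point is the identification $F(E\bar G_+, X^C)^{\bar G} \simeq F(E\bar G_+, X)^G$ in the first step; it is here that the genuine equivariant structure enters, in the form of the compatibility of $(-)^C$ with function spectra when the source has trivial $C$-action. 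The remaining steps are formal consequences of adjunctions, cofiber sequences, and the given presentation of $\widetilde{E\bar G}$.
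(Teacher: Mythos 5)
Your proof is correct and uses the same ingredients as the paper's: mapping the cofiber sequence $E\bar G_+ \to S^0 \to \widetilde{E\bar G}$ into $X$, identifying the fiber via the presentation $\widetilde{E\bar G} \simeq \hocolim_j S^{j\lambda^p}$, and a change-of-group compatibility of $(-)^C$ with mapping out of a $C$-trivial object. You merely rearrange the order of operations, passing to $G$-fixed points at the outset (via $F(E\bar G_+, X^C)^{\bar G} \simeq F(E\bar G_+, X)^G$) where the paper stays in $\bar G$-spectra throughout and instead invokes $\Sigma^{-j\bar\lambda}(X^C) \simeq (\Sigma^{-j\lambda^p} X)^C$ at the end of its chain of equivalences; both invocations are instances of the same adjunction $F(A, X^C) \simeq F(\epsilon^* A, X)^C$ for a $\bar G$-object $A$ with pullback $\epsilon^* A$, applied to $A = E\bar G_+$ in your case and to $A = S^{j\bar\lambda}$ in the paper's.
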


\begin{proof}
By mapping the $\bar G$-homotopy cofiber sequence
$E\bar G_+ \to S^0 \to \widetilde{E\bar G}$ into $X^C$, we get the
homotopy (co-)fiber sequence
$$
F(\widetilde{E\bar G}, X^C)^{\bar G} \to (X^C)^{\bar G}
	\overset{\Gamma_{n-1}}\longto F(E\bar G_+, X^C)^{\bar G} \,.
$$
At the left hand side we have a natural chain of equivalences
\begin{multline*}
F(\widetilde{E\bar G}, X^C)^{\bar G}
\simeq
F(\hocolim_j S^{j\bar\lambda}, X^C)^{\bar G}
\simeq
\holim_j F(S^{j\bar\lambda}, X^C)^{\bar G} \\
\simeq
\holim_j (\Sigma^{-j\bar\lambda} (X^C))^{\bar G}
\simeq
\holim_j ((\Sigma^{-j\lambda^p} X)^C)^{\bar G}
\simeq
\holim_j \, (\Sigma^{-j\lambda^p} X)^G \,.
\end{multline*}
This gives the asserted homotopy cofiber sequence.
\end{proof}

\begin{proposition} \label{prop-2.3}
Let $X$ be a $G$-spectrum.  There is a vertical map
of homotopy cofiber sequences
$$
\xymatrix{
\holim_j \Phi^C(\Sigma^{-j\lambda^p} X)^{\bar G} \ar[r] \ar[d]
& \Phi^C(X)^{\bar G} \ar[rr]^-{\Gamma_{n-1}} \ar[d]^{\hat\Gamma_n}
&& \Phi^C(X)^{h\bar G} \ar[d]^{(\hat\Gamma_1)^{h\bar G}} \\
\holim_j \, (\Sigma^{-j\lambda^p} X)^{tG} \ar[r]
& X^{tG} \ar[rr]^-{\Gamma_{n-1}}
&& (X^{tC})^{h\bar G} \rlap{\,.}
}
$$
The right hand horizontal maps are $\Gamma_{n-1}$ for the $\bar G$-spectra
$\Phi^C(X) \simeq [\widetilde{EG} \wedge X]^C$ and
$X^{tC} = [\widetilde{EG} \wedge F(EG_+, X)]^C$, respectively.
\end{proposition}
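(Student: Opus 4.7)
The plan is to obtain both horizontal cofiber sequences as instances of Lemma~\ref{lem-2.2} applied to two different auxiliary $G$-spectra built from $X$, and then to read off the vertical comparison from the naturality of that construction with respect to its $G$-spectrum input.

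For the top row I would apply Lemma~\ref{lem-2.2} to the $G$-spectrum $Y_1 = \widetilde{EG} \wedge X$. The preferred $\bar G$-equivalence $(\widetilde{EG} \wedge X)^C \simeq \Phi^C(X)$ recalled in Definition~\ref{dfn-1.3} identifies $(Y_1)^G$ with $\Phi^C(X)^{\bar G}$ and $(Y_1^C)^{h\bar G}$ with $\Phi^C(X)^{h\bar G}$. Since $\Sigma^{-j\lambda^p}$ is a $G$-equivariant functor, it commutes with smashing with $\widetilde{EG}$, giving $(\Sigma^{-j\lambda^p} Y_1)^G \simeq \Phi^C(\Sigma^{-j\lambda^p} X)^{\bar G}$ compatibly in $j$. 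Substituting these identifications into Lemma~\ref{lem-2.2} produces the top row, with the right-hand map equal to $\Gamma_{n-1}$ for the $\bar G$-spectrum $\Phi^C(X)$ as asserted.

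For the bottom row I would apply Lemma~\ref{lem-2.2} to $Y_2 = \widetilde{EG} \wedge F(EG_+, X)$. By definition of the Tate construction, $(Y_2)^G = X^{tG}$ and $(Y_2)^C = X^{tC}$ as $\bar G$-spectra, so the right-hand map of Lemma~\ref{lem-2.2} becomes $\Gamma_{n-1} \: X^{tG} \to (X^{tC})^{h\bar G}$ as asserted. Using the natural equivalence $F(EG_+, \Sigma^{-j\lambda^p} X) \simeq \Sigma^{-j\lambda^p} F(EG_+, X)$, obtained by commuting the representation-sphere suspension past the function spectrum, the left-hand term becomes $\holim_j (\Sigma^{-j\lambda^p} X)^{tG}$, as required.

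The canonical $G$-map $X \to F(EG_+, X)$ smashes with $\widetilde{EG}$ to yield a $G$-map $Y_1 \to Y_2$, and naturality of Lemma~\ref{lem-2.2} in this input produces the desired vertical map of cofiber sequences. On the middle term it is precisely $\hat\Gamma_n$ by the definition recalled in Definition~\ref{dfn-1.3}; on $C$-fixed points the map $Y_1 \to Y_2$ becomes $\hat\Gamma_1 \: \Phi^C(X) \to X^{tC}$, and applying $(-)^{h\bar G}$ gives the asserted rightmost vertical arrow $(\hat\Gamma_1)^{h\bar G}$. I anticipate no substantive obstacle beyond naturality bookkeeping; the main care required is to fix equivariant models so that the identifications $\Sigma^{-j\lambda^p}(\widetilde{EG} \wedge X) \simeq \widetilde{EG} \wedge \Sigma^{-j\lambda^p} X$ and $F(EG_+, \Sigma^{-j\lambda^p} X) \simeq \Sigma^{-j\lambda^p} F(EG_+, X)$ fit into compatible towers in $j$ and interact correctly with passage to fixed points, so that the homotopy limits and the naturality of Lemma~\ref{lem-2.2} can be invoked simultaneously.
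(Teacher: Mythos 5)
Your proposal is correct and matches the paper's own proof: the paper likewise obtains both rows by applying Lemma~\ref{lem-2.2} to $\widetilde{EG} \wedge X$ and $\widetilde{EG} \wedge F(EG_+, X)$, identifies the resulting terms via the $G$-dualizability of $S^{j\lambda^p}$, and takes the vertical map from naturality of Lemma~\ref{lem-2.2} applied to $\widetilde{EG} \wedge X \to \widetilde{EG} \wedge F(EG_+, X)$. Your account simply spells out the naturality step and the identifications of the vertical arrows slightly more explicitly than the paper does.
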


\begin{proof}
We replace $X$ in the lemma above by the $G$-spectra $\widetilde{EG}
\wedge X$ and $\widetilde{EG} \wedge F(EG_+, X)$.  This gives the two
claimed homotopy cofiber sequences, in view of the $\bar G$-equivalences
$$
[\Sigma^{-j\lambda^p} (\widetilde{EG} \wedge X)]^C
\simeq [\widetilde{EG} \wedge \Sigma^{-j\lambda^p} X]^C
\simeq \Phi^C(\Sigma^{-j\lambda^p} X)
$$
and
$$
[\Sigma^{-j\lambda^p} (\widetilde{EG} \wedge F(EG_+, X))]^C
\simeq [\widetilde{EG} \wedge F(EG_+, \Sigma^{-j\lambda^p} X)]^C
= (\Sigma^{-j\lambda^p} X)^{tC} \,,
$$
respectively.  These follow from the $G$-dualizability of
$S^{j\lambda^p}$.
\end{proof}

\begin{lemma} \label{lem-2.4}
If $\hat\Gamma_1 \: \Phi^C(X) \to X^{tC}$ is $(W, k)$-coconnected,
then $(\hat\Gamma_1)^{h\bar G}$ is $(W, k)$-coconnected.
\end{lemma}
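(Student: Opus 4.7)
The plan is to reduce the lemma to a statement about a single $\bar G$-spectrum and then apply the homotopy fixed point spectral sequence. Since $\hat\Gamma_1 \: \Phi^C(X) \to X^{tC}$ is $\bar G$-equivariant, its homotopy fiber $F = \hofib(\hat\Gamma_1)$ carries a natural $\bar G$-action, and the functor $(-)^{h\bar G} = F(E\bar G_+, -)^{\bar G}$ commutes with homotopy fibers (being a homotopy limit). Hence $\hofib\bigl((\hat\Gamma_1)^{h\bar G}\bigr) \simeq F^{h\bar G}$, and the task reduces to showing that if the non-equivariant spectrum $F$ is $(W,k)$-coconnected, then so is $F^{h\bar G}$.

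Next I would move $F(W,-)$ past the homotopy fixed points. Equip $W$ with the trivial $\bar G$-action; then $F(W,-)$ and $(-)^{\bar G}$ are both right adjoints, and they commute with each other as well as with the mapping spectrum $F(E\bar G_+, -)$. This yields a natural equivalence
\[
F(W, F^{h\bar G}) \;=\; F\bigl(W, F(E\bar G_+, F)^{\bar G}\bigr)
\;\simeq\; F\bigl(E\bar G_+, F(W, F)\bigr)^{\bar G}
\;=\; F(W, F)^{h\bar G}.
\]
The hypothesis that $\hat\Gamma_1$ is $(W,k)$-coconnected now reads $\pi_t F(W,F) = 0$ for all $t \ge k$, so the problem becomes: if a $\bar G$-spectrum $Z = F(W,F)$ has homotopy concentrated in degrees $< k$, then so does $Z^{h\bar G}$.

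For this last step I would invoke the homotopy fixed point spectral sequence
\[
E_2^{s,t} = H^s(\bar G;\, \pi_t F(W,F)) \Longrightarrow \pi_{t-s}\, F(W,F)^{h\bar G}.
\]
By the vanishing of $\pi_t F(W,F)$ for $t \ge k$, the $E_2$-page is supported in $t < k$, and since $s \ge 0$ we have $t-s < k$, so no bidegree with total degree $\ge k$ contributes. Therefore $\pi_j F(W,F)^{h\bar G} = 0$ for $j \ge k$, and combined with the equivalence of the previous paragraph this shows that $(\hat\Gamma_1)^{h\bar G}$ is $(W,k)$-coconnected.

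The main technical point that requires attention is the strong convergence of the homotopy fixed point spectral sequence for $F(W,F)$, which is bounded above (it agrees with its Postnikov truncation $\tau_{\le k-1}$) but need not be bounded below. I would handle this by filtering $F(W,F)$ by its Postnikov stages $\tau_{[m, k-1]}F(W,F)$ for $m \to -\infty$ and verifying the vanishing $\pi_j(-)^{h\bar G} = 0$ for $j \ge k$ stage by stage: each slice $\Sigma^m H\pi_m F(W,F)$ with $m \le k-1$ contributes to $(-)^{h\bar G}$ only through $H^{m-j}(\bar G;\pi_m F(W,F))$, which is nonzero only for $j \le m \le k-1$. Taking the homotopy inverse limit over $m$ preserves this vanishing through the standard $\lim^1$-exact sequence, yielding the required conclusion.
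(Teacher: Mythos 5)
Your argument is correct and follows the same route as the paper: reduce via commutation of $F(W,-)$, homotopy fibers, and $(-)^{h\bar G}$ to showing that the homotopy fixed points of a $k$-coconnected $\bar G$-spectrum are $k$-coconnected, and then invoke the homotopy fixed point spectral sequence. The paper states this tersely without addressing convergence; your Postnikov-tower argument is a legitimate way to supply the strong-convergence justification that the paper leaves implicit.
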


\begin{proof}
This is a special case of a more general result.  The homotopy
fixed point spectral sequence
$$
E^2_{s,t} = H^{-s}(G; \pi_t(Y)) \Longrightarrow \pi_{s+t}(Y^{hG})
$$
shows that $Y^{hG}$ is $k$-coconnected whenever $Y$ is a $k$-coconnected
$G$-spectrum.  Commutation of function spectra, homotopy fibers and
homotopy fixed points shows that $f^{hG} \: Y_1^{hG} \to Y_2^{hG}$ is $(W,
k)$-coconnected whenever $f \: Y_1 \to Y_2$ is a $(W, k)$-coconnected
$G$-map.  The lemma follows by applying this to the case of the $\bar
G$-map~$\hat\Gamma_1$.
\end{proof}

\begin{definition}
The \emph{Greenlees filtration} \cite{Gr87}*{p.~437} of $\widetilde{EG}
= S^{\infty\lambda}$ is an integer-indexed $G$-cellular filtration
of spectra, whose $2i$-th term is $S^{i\lambda}$ for each $i$.
The $(2i+1)$-th term is obtained from $S^{i\lambda}$ by attaching a single
$G$-free $(2i+1)$-cell, and $S^{(i+1)\lambda}$ is in turn obtained from
it by attaching a single $G$-free $(2i+2)$-cell.  The composite $G$-map
$S^{i\lambda} \to S^{(i+1)\lambda}$ is given by smashing $S^{i\lambda}$
with the one-point compactification $\tau \: S^0 \to S^\lambda$ of the
inclusion $\{0\} \subset \lambda$.  The Greenlees filtration induces
an increasing filtration of $X^{tG} = [\widetilde{EG} \wedge F(EG_+,
X)]^G$, and a tower of homotopy cofibers with $(2i+1)$-th term
\begin{equation}
	\label{eq-2.6}
X^{tG}\langle i\rangle = [\widetilde{EG}/S^{i\lambda} \wedge F(EG_+, X)]^G
\,,
\end{equation}
which we call the Tate tower.  The associated spectral
sequence is the homological $G$-equivariant Tate spectral sequence
$$
\hat E^2_{s,t} = \tH^{-s}(G; H_t(X))
$$
converging to the continuous homology groups
$$
H^c_*(X^{tG}) = \lim_i H_*(X^{tG}\langle i\rangle)
$$
of $X^{tG}$, when $X$ is a bounded below spectrum with $H_*(X)$
of finite type.  See \cite{LR12}*{Def.~2.3, Prop.~4.15}.  Note that $i$
tends to $-\infty$ in this limit.  We shall also refer to the continuous
cohomology groups
$$
H_c^*(X^{tG}) = \colim_i H^*(X^{tG}\langle i\rangle) \,,
$$
and note that $H^c_*(X^{tG}) \cong H_c^*(X^{tG})^*$ (the $\Hom$ dual)
when $H_*(X)$ is bounded below and of finite type, because then
each $H_*(X^{tG}\langle i\rangle)$ is also of finite type.
\end{definition}

\begin{definition}
Let the $G$-map $\xi \: S^\lambda \to S^{\lambda^p}$ of representation
spheres be the suspension of the standard degree~$p$ covering map $\pi
\: S(\lambda) \to S(\lambda^p)$ of unit circles, as in the following
vertical map of horizontal $G$-homotopy cofiber sequences:
$$
\xymatrix{
S(\lambda)_+ \ar[r] \ar[d]^{\pi_+} & S^0 \ar[r]^-{\tau} \ar@{=}[d]
	& S^\lambda \ar[d]^\xi \\
S(\lambda^p)_+ \ar[r] & S^0 \ar[r]^-z & S^{\lambda^p} \,.
}
$$
\end{definition}

We note that $\xi$ is not induced by a linear map.  It has degree~$p$
on the top cell, so $\xi_* \: H_*(S^\lambda) \to H_*(S^{\lambda^p})$ is
the zero homomorphism, since we work with reduced homology and mod~$p$
coefficients.

\begin{proposition} \label{prop-2.8}
Let $X$ be a $G$-spectrum with $H_*(X)$ bounded below.  Then
$$
\lim_j H^c_*((\Sigma^{-j\lambda^p} X)^{tG}) = \lim_{i,j}
H_*((\Sigma^{-j\lambda^p} X)^{tG}\langle i\rangle) = 0
$$
and
$$
\colim_j H_c^*((\Sigma^{-j\lambda^p} X)^{tG}) = \colim_{i,j}
H^*((\Sigma^{-j\lambda^p} X)^{tG}\langle i\rangle) = 0 \,.
$$
\end{proposition}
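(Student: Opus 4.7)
The two displayed equalities reduce to Fubini for iterated (co)limits, given the definitions $H^c_*(-) = \lim_i H_*(-\langle i\rangle)$ and $H_c^*(-) = \colim_i H^*(-\langle i\rangle)$; so it suffices to show that the double limit and colimit vanish. The structure maps in the $j$-system come from the $G$-map $z \: \Sigma^{-(j+1)\lambda^p}X \to \Sigma^{-j\lambda^p}X$ induced by $z \: S^0 \to S^{\lambda^p}$. Crucially, \emph{non-equivariantly} $S^{\lambda^p}$ is just $S^2$ and $z$ is the basepoint inclusion, which is stably null since $\pi_{-2}(S) = 0$. Hence the induced non-equivariant spectrum map $\Sigma^{-2}X \to X$ is null, and therefore acts as zero on mod-$p$ (co)homology.

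Consequently, on the $E^2$-page of the homological Tate spectral sequence,
\[
\hat E^2_{s,t}((\Sigma^{-j\lambda^p}X)^{tG}) = \tH^{-s}(G; H_t(\Sigma^{-j\lambda^p}X)) = \tH^{-s}(G; H_{t-2j}(X)),
\]
the map induced by $z^{tG}$ acts as the identity on the group-cohomology factor tensored with the non-equivariantly zero map $z_* \: H_{t-2}(X) \to H_t(X)$, and so vanishes. Being zero on $E^2$, it is zero on every $E^r$-page, in particular on $E^\infty$; the same holds for the cohomological Tate spectral sequence.

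To promote ``zero on $E^\infty$'' to vanishing of $\lim_j H^c_*$, I invoke the strong convergence of the Tate spectral sequence under the bounded-below and finite-type hypotheses (see \cite{LR12}*{Prop.~4.15}): each $H^c_n((\Sigma^{-j\lambda^p}X)^{tG})$ carries a Hausdorff filtration with associated graded $E^\infty$, and the connectivity $t_0$ of $H_*(X)$ bounds the filtration index on $H^c_n((\Sigma^{-(j+N)\lambda^p}X)^{tG})$ above by $s \leq n - 2(j+N) - t_0$. Since $z^{tG}_*$ vanishes on $E^\infty$, it strictly lowers the filtration index; iterating $N$ times maps the source into target filtration index $\leq n - 2j - 3N - t_0$, which tends to $-\infty$ as $N$ grows. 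Any compatible system $(a_j) \in \lim_j H^c_n$ then satisfies $a_j = (z^{tG}_*)^N(a_{j+N})$ for all $N$, forcing $a_j$ into arbitrarily deep filtration, hence $a_j = 0$ by Hausdorffness. The colimit statement follows by the $\Hom$-duality between the profinite $H^c_n$ and the discrete $H_c^n$ noted in the Greenlees-filtration definition above, which identifies $\colim_j H_c^*$ as the Pontryagin dual of $\lim_j H^c_*$.

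The main obstacle is bridging ``zero on $E^\infty$'' to vanishing on the abutment: this requires both the strong convergence of the Tate spectral sequence and careful tracking of how the filtration upper bound shifts under $\lambda^p$-suspension, for which the bounded-below and finite-type hypotheses on $H_*(X)$ are essential.
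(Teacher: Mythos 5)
Your proposal correctly identifies that the structure maps of the $j$-tower become zero on the $E^2$-page of the Tate spectral sequence, and correctly sees that the bounded-below hypothesis plus strong convergence are what should close the argument. But there is a genuine gap, hidden by a sign error, and it is precisely the gap that the paper's $\nu$-equivalence is designed to close.

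You work directly with the $z$-tower $\{(\Sigma^{-j\lambda^p}X)^{tG}\}_j$. Non-equivariantly, $\Sigma^{-j\lambda^p}X \simeq \Sigma^{-2j}X$, so $H_t(\Sigma^{-j\lambda^p}X) \cong H_{t+2j}(X)$, which vanishes for $t < t_0 - 2j$. In total degree $n$ this gives the filtration bound $s = n - t \leq n + 2j - t_0$; you wrote $s \leq n - 2(j+N) - t_0$, with the wrong sign on the $2(j+N)$ term. The correct bound \emph{grows} linearly in $N$, at the same rate ($2$ per step) as $z_*^N$ can be expected to lower the filtration (at least $1$ per step from being zero on $E^\infty$); the two effects do not produce a contradiction, and iterating $a_j = z_*^N(a_{j+N})$ yields no useful constraint. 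The toy example of the $p$-adic filtration illustrates the danger: an inverse system $A_{j+1} \to A_j$ of filtered groups whose maps strictly lower filtration can still have nonzero limit if the upper bound on filtration in $A_{j+N}$ drifts upward with $N$.

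The missing idea is the shearing equivalence $\nu \: (\Sigma^{-j\lambda^p} X)^{tG}\langle i\rangle \overset{\simeq}\longto (\Sigma^{j(\lambda-\lambda^p)} X)^{tG}\langle i-j\rangle$, obtained by transferring the $S^{j\lambda}$ factor out of $\widetilde{EG}/S^{i\lambda}$ and onto $X$. Under $\nu$ the $z$-tower map becomes, in the limit over $i$, the map induced by $\xi \: S^\lambda \to S^{\lambda^p}$ (composed with a Tate-tower map $\tau$, which disappears after taking $\lim_i$). The crucial gain is that $\lambda - \lambda^p$ has virtual dimension $0$, so $\Sigma^{j(\lambda-\lambda^p)}X$ has a \emph{$j$-independent} non-equivariant homotopy type: the vertical vanishing line $\ell$ of the Tate $E^2$-page is the same for all terms of the $\xi$-tower. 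Since $\xi_* = 0$ on mod-$p$ homology (degree $p$ on the top cell), the map on $E^2$ and hence $E^\infty$ vanishes, so $\xi_*$ strictly raises the vertical degree of every nonzero continuous homology class; with a uniform lower bound $\ell$ on vertical degree, iterating $m$ times for $m$ large forces $x_{j+m}=0$, hence $x_j = 0$. The cohomological statement then follows by $\Hom$-duality, as you indicate. In short: your identification of ``zero on $E^2$'' is fine, but without the $\nu$-shear you are comparing spectral sequences whose vanishing lines run away, and the strictly-decreasing-filtration argument does not terminate.
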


\begin{proof}
In the notation of~\eqref{eq-2.6} we have a natural equivalence
$$
\nu \: (\Sigma^{-j\lambda^p} X)^{tG}\langle i\rangle
	\overset{\simeq}\longto
	(\Sigma^{j(\lambda-\lambda^p)} X)^{tG}\langle i{-}j\rangle
$$
for each $i$ and $j$.
It is obtained from the $G$-equivalence
$$
\frac{S^{\infty\lambda} \wedge S^{j\lambda}}{S^{i\lambda}}
\wedge F(EG_+, \Sigma^{-j\lambda^p}X)
\overset{\simeq}\longto
\frac{S^{\infty\lambda}}{S^{(i-j)\lambda}}
\wedge F(EG_+, S^{j\lambda} \wedge \Sigma^{-j\lambda^p}X)
$$
(see \cite{LMS86}*{III.1}) by passage to $G$-fixed points.
Under these equivalences, the $z$-tower map
$$
z \: (\Sigma^{-(j+1)\lambda^p} X)^{tG}\langle i\rangle
	\to (\Sigma^{-j\lambda^p} X)^{tG}\langle i\rangle
$$
induced by smashing with $z \: S^0 \to S^{\lambda^p}$ is
compatible with the composite of the Tate tower map
$$
\tau \: (\Sigma^{(j+1)(\lambda-\lambda^p)} X)^{tG}\langle i{-}j{-}1\rangle
	\to (\Sigma^{(j+1)(\lambda-\lambda^p)} X)^{tG}\langle i{-}j\rangle
$$
induced by smashing with $\tau \: S^0 \to S^\lambda$, and the
$\xi$-tower map
$$
\xi \: (\Sigma^{(j+1)(\lambda-\lambda^p)} X)^{tG}\langle i{-}j\rangle
	\to (\Sigma^{j(\lambda-\lambda^p)} X)^{tG}\langle i{-}j\rangle
$$
induced by smashing with $\xi \: S^\lambda \to S^{\lambda^p}$, in
the sense that the following diagram commutes up to homotopy:
$$
\xymatrix@C-3.8pc{
[\widetilde{EG}/S^{i\lambda} \wedge F(EG_+, \Sigma^{-(j+1)\lambda^p} X)]^G
\ar[dr]^-{\nu}_-{\simeq} \ar[ddd]_{z} \\
& [\widetilde{EG}/S^{(i-j-1)\lambda} \wedge
	F(EG_+, \Sigma^{(j+1)(\lambda-\lambda^p)} X)]^G \ar[d]^{\tau} \\
& [\widetilde{EG}/S^{(i-j)\lambda} \wedge
	F(EG_+, \Sigma^{(j+1)(\lambda-\lambda^p)} X)]^G \ar[dd]^{\xi} \\
[\widetilde{EG}/S^{i\lambda} \wedge F(EG_+, \Sigma^{-j\lambda^p} X)]^G
\ar[dr]_-{\nu}^-{\simeq} \\
& [\widetilde{EG}/S^{(i-j)\lambda} \wedge
	F(EG_+, \Sigma^{j(\lambda-\lambda^p)} X)]^G \,.
}
$$
To see this, note that the diagram
$$
\xymatrix{
S^{\lambda} \wedge F(EG_+, S^0 \wedge X)
\ar[r]_-{\simeq}^-{\nu} \ar[dd]_{z}
& S^0 \wedge
	F(EG_+, S^\lambda \wedge X) \ar[d]^{\tau} \\
& S^\lambda \wedge
	F(EG_+, S^\lambda \wedge X) \ar[d]^{\xi} \\
S^\lambda \wedge F(EG_+, S^{\lambda^p} \wedge X)
\ar@{=}[r]
& S^\lambda \wedge
	F(EG_+, S^{\lambda^p} \wedge X)
}
$$
commutes up to $G$-homotopy (because the twist map $S^\lambda \wedge
S^\lambda \cong S^\lambda \wedge S^\lambda$ is $G$-homotopic to the
identity), replace $X$ with $\Sigma^{j\lambda - (j+1)\lambda^p}X$, smash
with $\widetilde{EG}/S^{(i-j-1)\lambda}$ and pass to $G$-fixed points.

Passing to homology, we get that $z_*$ is strictly compatible with
the composite $\xi_* \tau_*$ under the isomorphisms $\nu_*$.
Next pass to continuous homology by forming limits over $i$ along the
homomorphisms $\tau_*$.  Then the homomorphism
$$
z_* \: H^c_*((\Sigma^{-(j+1)\lambda^p} X)^{tG})
	\to H^c_*((\Sigma^{-j\lambda^p} X)^{tG})
$$
is identified with the homomorphism
\begin{equation}
	\label{eq-2.9}
\xi_* \: H^c_*((\Sigma^{(j+1)(\lambda-\lambda^p)} X)^{tG})
	\to H^c_*((\Sigma^{j(\lambda-\lambda^p)} X)^{tG}) \,,
\end{equation}
so it suffices to show that the limit over $j$ of the latter homomorphisms
is zero.  Let
$$
\hat E^2_{s,t}(j) = \tH^{-s}(G; H_t(\Sigma^{j(\lambda-\lambda^p)} X))
\Longrightarrow H^c_{s+t}((\Sigma^{j(\lambda-\lambda^p)} X)^{tG})
$$
be the homological Tate spectral sequence for the $j$-th term in the
$\xi$-tower.

By naturality of the Tate spectral sequence,
the homomorphism $\xi_*$ above is compatible with the spectral sequence
map $\hat E^2_{**}(j+1) \to \hat E^2_{**}(j)$ that is induced on
Tate cohomology by the $G$-module homomorphism
$$
\xi_* \: H_*(\Sigma^{(j+1)(\lambda-\lambda^p)} X)
	\to H_*(\Sigma^{j(\lambda-\lambda^p)} X) \,.
$$
This homomorphism is zero, since $\xi_* \: H_*(S^\lambda) \to
H_*(S^{\lambda^p})$ is zero.  Hence the map of spectral sequences is also
zero.  It follows that the homomorphism $\xi_*$ in~\eqref{eq-2.9} strictly
reduces the Tate filtration ($=s$) of each nonzero continuous homology
class. Equivalently, $\xi_*$ strictly increases the vertical degree
($=t$) of the spectral sequence representative of each nonzero class.

By assumption, there is an integer $\ell$ such that $H_t(X) = 0$ for
all $t < \ell$.  Then $\hat E^2_{s,t}(j) = \hat E^\infty_{s,t}(j) = 0$
for $t < \ell$ and any $j$.  If $x = (x_j)_j$ is an arbitrary element
of $\lim_j H^c_*((\Sigma^{j(\lambda-\lambda^p)} X)^{tG})$, then $x_j =
\xi_*^m(x_{j+m})$ for each $m\ge0$.  If $x_j$ is represented in vertical
degree~$t$, then $x_{j+m}$ must be represented in vertical degree $\le
(t-m)$.  Choosing $m$ so large that $t-m < \ell$, it follows that $x_{j+m}
= 0$, which implies $x_j = 0$.  Repeating the argument for each $j$
we see that $x=0$, so $\lim_j H^c_*((\Sigma^{j(\lambda-\lambda^p)}
X)^{tG})$ must be the trivial group.

Let $M = \colim_j H_c^*((\Sigma^{-j\lambda^p} X)^{tG})$.  Then the $\Hom$
dual $M^*$ is the limit group we just showed is zero, and $M$ injects
into its double $\Hom$ dual $M^{**}$, so $M = 0$ as well.
\end{proof}

\begin{proposition} \label{prop-2.10}
Let $X$ be a $G$-spectrum such that $\pi_*(X)$ is bounded below and $H_*(X)$
is of finite type.  Then the $p$-adic completion $Y\sphat_p$ of
$$
Y = \holim_j \, (\Sigma^{-j\lambda^p} X)^{tG}
$$
is contractible.  Hence the map $\Gamma_{n-1} \: X^{tG} \to
(X^{tC})^{h\bar G}$ is a $p$-adic equivalence.
\end{proposition}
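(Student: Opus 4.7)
The plan is to deduce the second assertion from the first. The bottom row of Proposition~\ref{prop-2.3} is a homotopy cofibre sequence of spectra, hence also a homotopy fibre sequence, identifying $Y$ with the homotopy fibre of $\Gamma_{n-1}\colon X^{tG}\to (X^{tC})^{h\bar G}$. Once $Y\sphat_p$ is known to be contractible the map $\Gamma_{n-1}$ is automatically a $p$-adic equivalence, so the substance of the proposition lies in proving $Y\sphat_p\simeq\ast$.

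For this, I would set up a conditionally convergent mod~$p$ Adams spectral sequence abutting to $\pi_*(Y\sphat_p)$. Since $Y$ is the homotopy limit of the doubly indexed tower of spectra $\{(\Sigma^{-j\lambda^p}X)^{tG}\langle i\rangle\}_{i,j}$, whose individual terms are bounded below for each fixed $i$ and have mod~$p$ cohomology of finite type under the hypotheses on $X$, one can construct compatible Adams resolutions stage by stage and then form the homotopy limit. The resulting spectral sequence has $E_2$-page
\[
E_2^{s,t} \;=\; \Ext_{\A}^{s,t}\!\bigl(\colim_{i,j} H^*((\Sigma^{-j\lambda^p} X)^{tG}\langle i\rangle),\,\F_p\bigr)
\]
and abuts to $\pi_{t-s}(Y\sphat_p)$. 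Proposition~\ref{prop-2.8} is precisely the statement that the direct limit of cohomology appearing here vanishes, so the entire $E_2$-page is identically zero and $Y\sphat_p\simeq\ast$, as required.

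The main obstacle is justifying that this inverse limit of Adams spectral sequences really computes $\pi_*(Y\sphat_p)$ with the $E_2$-term written above. The full Tate spectra $(\Sigma^{-j\lambda^p}X)^{tG}$ are unbounded below, so the classical connective Adams convergence theorem does not apply directly to them; instead one has to work at the level of the Greenlees filtration, where each cofibre $(\Sigma^{-j\lambda^p}X)^{tG}\langle i\rangle$ is bounded below and therefore admits a well-behaved Adams spectral sequence, and then carefully track the $\lim$ and $\lim^1$ contributions as $i,j\to -\infty$. The hypotheses that $\pi_*(X)$ is bounded below and $H_*(X)$ is of finite type are exactly what make the continuous (co)homology groups appearing in Proposition~\ref{prop-2.8} degreewise finite, and what force the associated towers to satisfy the Mittag--Leffler condition needed to identify the limit of Adams spectral sequences with an Adams-type spectral sequence for $Y\sphat_p$ with the stated $E_2$-page. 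Once that identification is in place, the vanishing furnished by Proposition~\ref{prop-2.8} completes the argument.
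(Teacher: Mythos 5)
Your proposal follows essentially the same route as the paper: deduce the second assertion from the cofiber sequence in Proposition~\ref{prop-2.3}, set up an inverse limit Adams spectral sequence over the doubly indexed tower from the Greenlees filtration, identify its $E_2$-page with $\Ext_{\A}$ of the colimit of continuous cohomology groups, and invoke Proposition~\ref{prop-2.8} to conclude the $E_2$-page vanishes. The paper handles the convergence concerns you raise by rewriting $(\Sigma^{-j\lambda^p} X)^{tG}\langle i\rangle$ as the homotopy orbit spectrum $(\widetilde{EG}/S^{i\lambda} \wedge \Sigma^{-j\lambda^p} X)_{hG}$ via the Adams equivalence (making bounded-below and finite type immediate) and by citing \cite{CMP87}*{Prop.~7.1} and \cite{LR12}*{Prop.~2.2} for the convergence of the inverse limit Adams spectral sequence, rather than reproving it.
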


\begin{proof}
The spectrum $Y$
is the homotopy limit over $i$ and $j$ of the spectra
$$
(\Sigma^{-j\lambda^p} X)^{tG}\langle i\rangle =
[\widetilde{EG}/S^{i\lambda} \wedge F(EG_+, \Sigma^{-j\lambda^p} X)]^G
\,,
$$
which can be rewritten as
$$
(\widetilde{EG}/S^{i\lambda} \wedge \Sigma^{-j\lambda^p} X)_{hG}
$$
by the Adams equivalence \cite{LMS86}*{II.8.4}, since
$\widetilde{EG}/S^{i\lambda}$ is a free $G$-CW spectrum.  Each of these
is bounded below with mod~$p$ homology of finite type.  Hence there is
an inverse limit Adams spectral sequence
$$
E_2^{**} = \Ext_{\A}^{**}(M, \F_p) \Longrightarrow \pi_*(Y\sphat_p)
$$
converging to the $p$-adic homotopy of that homotopy limit
(see \cite{CMP87}*{Prop.~7.1} and \cite{LR12}*{Prop.~2.2}), where
$$
M = \colim_j H_c^*((\Sigma^{-j\lambda^p} X)^{tG}) \,.
$$
The latter $\A$-module was shown to be zero in Proposition~\ref{prop-2.8},
hence the $E_2$-term is zero and $Y\sphat_p$ is contractible.
The second conclusion follows from Proposition~\ref{prop-2.3}.
\end{proof}

\begin{proof}[Proof of Theorem~\ref{thm-1.6}]
Consider the diagram in Proposition~\ref{prop-2.3}.  By assumption,
the maps
$$
\Gamma_{n-1} \: \Phi^C(X)^{\bar G} \to \Phi^C(X)^{h\bar G}
\qquad\text{and}\qquad
\Gamma_1 \: X^C \to X^{hC}
$$
are $(W, k)$-coconnected.  Hence $\hat\Gamma_1 \: \Phi^C(X) \to
X^{tC}$ is $(W, k)$-coconnected, so by Lemma~\ref{lem-2.4} also
$$
(\hat\Gamma_1)^{h\bar G} \: \Phi^C(X)^{h\bar G} \to (X^{tC})^{h\bar G}
$$
is $(W, k)$-coconnected.  By Proposition~\ref{prop-2.10}, the map
$\Gamma_{n-1} \: X^{tG} \to (X^{tC})^{h\bar G}$ is a $p$-adic equivalence,
hence $(W, -\infty)$-coconnected, by our standing assumption that $W$
is in the localizing ideal of spectra generated by $S^{-1}\!/p^\infty$.
It follows easily that $\hat\Gamma_n \: \Phi^C(X)^{\bar G} \to X^{tG}$
is $(W, k)$-coconnected, which is equivalent to $\Gamma_n \: X^G \to
X^{hG}$ being $(W, k)$-coconnected.
\end{proof}

\begin{proof}[Proof of Theorem~\ref{cor-1.7}]
This follows by induction on $n$, using Theorem~\ref{thm-1.6} and the
observation that
$$
\Phi^{C_p}(\Phi^{C_{p^e}}(X)) \cong \Phi^{C_{p^{e+1}}}(X)
$$
for all $0 \le e < n$.
\end{proof}


\begin{proof}[Proof of Theorem~\ref{thm-1.9}]
This follows from Theorem~\ref{cor-1.7} in the case $X = B^{\wedge
p^n}$, $W = S^{-1}\!/p^\infty$ and $k=-\infty$, once we show that for each
$0 \le e < n$ there is a $C_{p^{n-e}}$-equivalence
$$
Y = \Phi^{C_{p^e}}(B^{\wedge p^n}) \simeq B^{\wedge p^{n-e}} \,,
$$
the right hand side is bounded below with mod~$p$ homology of finite type,
and $\Gamma_1 \: Y^{C_p} \to Y^{hC_p}$ is a $p$-adic equivalence.
The first claim follows from the proof in simplicial degree~$0$
of \cite{HM97}*{Prop.~2.5}.  Writing $Y \simeq Z^{\wedge p}$, where
$Z = B^{\wedge p^{n-e-1}}$ is bounded below with $H_*(Z)$ of finite
type, the other claims also follow, since $\Gamma_1 \: (Z^{\wedge
p})^{C_p} \to (Z^{\wedge p})^{hC_p}$ is a $p$-adic equivalence by
\cite{LR12}*{Thm.~5.13}, generalizing \cite{BMMS86}*{\S II.5}.
\end{proof}

\begin{proof}[Proof of Theorem~\ref{thm-1.10}]
There is a $C_{p^{n-1}}$-equivalence
$$
r \: \Phi^{C_p} THH(B) \overset{\simeq}\longto THH(B)
$$
(the cyclotomic structure map of $THH(B)$, see \cite{HM97}*{\S2.5}), whose
$e$-fold iterate is a $C_{p^{n-e}}$-equivalence $\Phi^{C_{p^e}}(THH(B))
\simeq THH(B)$.  It is clear from the simplicial definition that $THH(B)$
is connective and has mod~$p$ homology of finite type, hence the theorem
follows from Theorem~\ref{cor-1.7}.
\end{proof}

\begin{bibdiv}
\begin{biblist}

\bib{Ad82}{article}{
   author={Adams, J. F.},
   title={Graeme Segal's Burnside ring conjecture},
   journal={Bull. Amer. Math. Soc. (N.S.)},
   volume={6},
   date={1982},
   number={2},
   pages={201--210},
}

\bib{AGM85}{article}{
   author={Adams, J. F.},
   author={Gunawardena, J. H.},
   author={Miller, H.},
   title={The Segal conjecture for elementary abelian $p$-groups},
   journal={Topology},
   volume={24},
   date={1985},
   number={4},
   pages={435--460},
}

\bib{AR02}{article}{
   author={Ausoni, Ch.},
   author={Rognes, J.},
   title={Algebraic $K$-theory of topological $K$-theory},
   journal={Acta Math.},
   volume={188},
   date={2002},
   number={1},
   pages={1--39},
}

\bib{AR12}{article}{
   author={Ausoni, Ch.},
   author={Rognes, J.},
   title={Algebraic $K$-theory of the first Morava $K$-theory},
   journal={J. Eur. Math. Soc.},
   volume={14},
   date={2012},
   pages={1041--1079},
}

\bib{BHM89}{article}{
   author={B{\"o}kstedt, M.},
   author={Hsiang, W. C.},
   author={Madsen, I.},
   title={The cyclotomic trace and the $K$-theoretic analogue of Novikov's
   conjecture},
   journal={Proc. Nat. Acad. Sci. U.S.A.},
   volume={86},
   date={1989},
   number={22},
   pages={8607--8609},
}

\bib{BMMS86}{book}{
   author={Bruner, R. R.},
   author={May, J. P.},
   author={McClure, J. E.},
   author={Steinberger, M.},
   title={$H_\infty $ ring spectra and their applications},
   series={Lecture Notes in Mathematics},
   volume={1176},
   publisher={Springer-Verlag},
   place={Berlin},
   date={1986},
   pages={viii+388},
}

\bib{Ca84}{article}{
   author={Carlsson, G.},
   title={Equivariant stable homotopy and Segal's Burnside ring conjecture},
   journal={Ann. of Math. (2)},
   volume={120},
   date={1984},
   number={2},
   pages={189--224},
}

\bib{CMP87}{article}{
   author={Caruso, J.},
   author={May, J. P.},
   author={Priddy, S. B.},
   title={The Segal conjecture for elementary abelian $p$-groups. II.
   $p$-adic completion in equivariant cohomology},
   journal={Topology},
   volume={26},
   date={1987},
   number={4},
   pages={413--433},
}

\bib{Gr87}{article}{
   author={Greenlees, J. P. C.},
   title={Representing Tate cohomology of $G$-spaces},
   journal={Proc. Edinburgh Math. Soc. (2)},
   volume={30},
   date={1987},
   number={3},
   pages={435--443},
}

\bib{GM95}{article}{
   author={Greenlees, J. P. C.},
   author={May, J. P.},
   title={Generalized Tate cohomology},
   journal={Mem. Amer. Math. Soc.},
   volume={113},
   date={1995},
   number={543},
   pages={viii+178},
}

\bib{Gu80}{article}{
   author={Gunawardena, J. H. C.},
   title={Segal's conjecture for cyclic groups of (odd) prime order},
   note={J. T. Knight Prize Essay, Univ. Cambridge, Cambridge},
   date={1980},
}

\bib{HM97}{article}{
   author={Hesselholt, L.},
   author={Madsen, I.},
   title={On the $K$-theory of finite algebras over Witt vectors of perfect
   fields},
   journal={Topology},
   volume={36},
   date={1997},
   number={1},
   pages={29--101},
}

\bib{HPS97}{article}{
   author={Hovey, M.},
   author={Palmieri, J. H.},
   author={Strickland, N. P.},
   title={Axiomatic stable homotopy theory},
   journal={Mem. Amer. Math. Soc.},
   volume={128},
   date={1997},
   number={610},
   pages={x+114},
}

\bib{LMS86}{book}{
   author={Lewis, L. G., Jr.},
   author={May, J. P.},
   author={Steinberger, M.},
   title={Equivariant stable homotopy theory},
   series={Lecture Notes in Mathematics},
   volume={1213},
   note={With contributions by J. E. McClure},
   publisher={Springer-Verlag},
   place={Berlin},
   date={1986},
   pages={x+538},
}

\bib{LDMA80}{article}{
   author={Lin, W. H.},
   author={Davis, D. M.},
   author={Mahowald, M. E.},
   author={Adams, J. F.},
   title={Calculation of Lin's Ext groups},
   journal={Math. Proc. Cambridge Philos. Soc.},
   volume={87},
   date={1980},
   number={3},
   pages={459--469},
}

\bib{LR11}{article}{
   author={Lun{\o}e--Nielsen, S.},
   author={Rognes, J.},
   title={The Segal conjecture for topological Hochschild homology of
	complex cobordism},
   journal={J. Topol.},
   volume={4},
   date={2011},
   pages={591--622},
}

\bib{LR12}{article}{
   author={Lun{\o}e--Nielsen, S.},
   author={Rognes, J.},
   title={The topological Singer construction},
   journal={Doc. Math.},
   volume={17},
   date={2012},
   pages={861--909},
}

\bib{MW83}{article}{
   author={Miller, H.},
   author={Wilkerson, C.},
   title={On the Segal conjecture for periodic groups},
   conference={
      title={Proceedings of the Northwestern Homotopy Theory Conference
      (Evanston, Ill., 1982)},
   },
   book={
      series={Contemp. Math.},
      volume={19},
      publisher={Amer. Math. Soc.},
      place={Providence, RI},
   },
   date={1983},
   pages={233--246},
}

\bib{Ra81}{article}{
   author={Ravenel, D. C.},
   title={The Segal conjecture for cyclic groups},
   journal={Bull. London Math. Soc.},
   volume={13},
   date={1981},
   number={1},
   pages={42--44},
}

\bib{Ra84}{article}{
   author={Ravenel, D. C.},
   title={The Segal conjecture for cyclic groups and its consequences},
   note={With an appendix by Haynes R. Miller},
   journal={Amer. J. Math.},
   volume={106},
   date={1984},
   number={2},
   pages={415--446},
}

\bib{Ro99}{article}{
   author={Rognes, J.},
   title={Topological cyclic homology of the integers at two},
   journal={J. Pure Appl. Algebra},
   volume={134},
   date={1999},
   number={3},
   pages={219--286},
}

\bib{Sm70}{article}{
   author={Smith, L.},
   title={On realizing complex bordism modules. Applications to the
   stable homotopy of spheres},
   journal={Amer. J. Math.},
   volume={92},
   date={1970},
   pages={793--856},
}

\bib{To71}{article}{
   author={Toda, H.},
   title={On spectra realizing exterior parts of the Steenrod algebra},
   journal={Topology},
   volume={10},
   date={1971},
   pages={53--65},
}

\bib{Ts98}{article}{
   author={Tsalidis, S.},
   title={Topological Hochschild homology and the homotopy descent problem},
   journal={Topology},
   volume={37},
   date={1998},
   number={4},
   pages={913--934},
}

\end{biblist}
\end{bibdiv}

\end{document}